\definecolor{LinkColor}{rgb}{0,0,0} 
\newcommand{\set}[2]{\left\{\smash{#1}\left|%
    \vphantom{\smash{#1}}\vphantom{\smash{#2}}\right.\,\smash{#2}\right\}}
\newcommand{\NPG}[2]{\hyperref[NP_G]{\textup{NP(\ensuremath{#1}, \ensuremath{#2})}}}
\newcommand{\SNP}[2]{\hyperref[NPS]{\textup{SNP(\ensuremath{#1}, \ensuremath{#2})}}}
\newcommand{\NP}[3]{\hyperref[NP_subgroup]{\textup{NP\ensuremath{(#1 \leq #2, #3)}}}}
\theoremstyle{plain} 
\newtheorem{lemma}{Lemma}
\newaliascnt{prop}{lemma}
\newtheorem{proposition}[prop]{Proposition}
\crefname{prop}{Proposition}{Propositions}
\newaliascnt{coro}{lemma}
\newtheorem{corollary}[coro]{Corollary}
\crefname{coro}{Corollary}{Corollaries}
\newaliascnt{theo}{lemma}
\newtheorem{theorem}[theo]{Theorem}
\crefname{theo}{Theorem}{Theorems}
\theoremstyle{definition}
\newaliascnt{defn}{lemma}
\newtheorem{definition}[defn]{Definition}
\newaliascnt{rem}{lemma}
\newtheorem{remark}[rem]{Remark}
\begin{document}

\title[Subgroup Normalizer Problem for Nilpotent and Metacyclic Groups]{The Subgroup Normalizer Problem \\ for Integral Group Rings of some \\ Nilpotent and Metacyclic Groups}
\author{Andreas Bächle}

\thanks{The author is supported by the Research Foundation Flanders (FWO -- Vlaanderen).}
\date{}

\subjclass[2010]{Primary 16U60, 16U70; Secondary 20D15}

\keywords{integral group ring, normalizer, nilpotent, metacyclic}

\begin{abstract}
 For a group $G$ and a subgroup $H$ of $G$ this article discusses the normalizer of $H$ in the units of a group ring $RG$. We prove that $H$ is only normalized by the `obvious' units, namely products of elements of $G$ normalizing $H$ and units of $RG$ centralizing $H$, provided $H$ is cyclic. Moreover we show that the normalizers of all subgroups of certain nilpotent and metacyclic groups in the corresponding group rings are as small as possible. These classes contain all dihedral groups, all finite nilpotent groups and all finite groups with all Sylow subgroups being cyclic.
\end{abstract}
 
\maketitle

 \section{Introduction}
 
 Let throughout $G$ denote a (possibly infinite) group. Let $R$ be a commutative ring with identity element $1$. By $RG$ we denote the group ring of $G$ over $R$ and by $\textup{U}(RG)$ its group of units.
 The group $\textup{U}(RG)$ acts by conjugation on itself. Evidently, $G \leq \textup{U}(RG)$ is normalized by all elements of $G \cdot \textup{Z}(\textup{U}(RG))$. The question whether equality holds was raised by Jackowski and Marciniak \cite[3.7. Question]{JaMa} and is also one of the research problems in the fundamental book of Sehgal \cite[Problem 43]{Se}:
 \begin{equation} \textup{N}_{\textup{U}(RG)}(G) = G \cdot \textup{Z}(\textup{U}(RG)). \tag*{NP($G$, $R$)} \label{NP_G} \end{equation} 
 It turned out that answering that question in the negative by constrcuting a counterexample, was a crucial step for the construction of the counterexample for the Isomorphism Problem, the long-standing problem if a finite group is determined by its group ring over the integers; this was done by Hertweck in his PhD thesis \cite[Theorems A and B]{He1}, cf.\ also \cite[Theorems A and B]{HeAnn}. Although this example is known, there are big and important classes of groups for which \NPG{G}{\mathbb{Z}} holds: for finite groups with normal Sylow $2$-subgroups (Jackowski, Marciniak \cite{JaMa}), finite metabelian groups with abelian Sylow $2$-subgroups (Marciniak, Roggenkamp \cite{MaRo}), finite Blackburn Groups and groups having an abelian subgroup of index $2$ (Li, Parmenter, Sehgal \cite{LiPaSe}). Also for Frobenius groups (Petit Lob{\~a}o,  Polcino Milies \cite{LoPo}), finite quasi-nilpotent and finite $2$-constrained groups with $G/\operatorname{O}_2(G)$ having no chief factor of order $2$ (Hertweck, Kimmerle \cite{HeKi1}).
 Furthermore for certain infinite groups, namely locally nilpotent groups (Jespers, Juriaans, de Miranda, Rogerio \cite{JJMR}), groups with every finite normal subgroup having a normal Sylow $2$-subgroup (Hertweck \cite{HeHabil}) and for arbitrary Blackburn groups (Hertweck, Jespers \cite{HeJe}) the property \NPG{G}{\mathbb{Z}} has been verified. (In some of the results more general coefficient rings than the integers are allowed.)
 
 For subgroups $H \leq G$ of the group basis $G$, the statement corresponding to \NPG{G}{R} is
\begin{equation} \textup{N}_{\textup{U}(RG)}(H) = \textup{N}_G(H) \cdot \textup{C}_{\textup{U}(RG)}(H). \tag*{NP($H \leq G$, $R$)}\label{NP_subgroup} \end{equation} 
We say that $H \leq G$ together with the coefficient ring $R$ has the \emph{normalizer property}, if this equality holds true. Note that there are different points of view on this question: we can fix (the isomorphism type of) $H$ and see whether \NP{H}{G}{R} holds, whenever $H$ embeds into a group $G$ or we can fix a group $G$ and see whether the property is true for all its subgroups, i.e.\ whether \begin{equation*} \forall \; H \leq G \colon \qquad \textup{N}_{\textup{U}(RG)}(H) = \textup{N}_G(H) \cdot \textup{C}_{\textup{U}(RG)}(H). \tag*{SNP($G, R$)}\label{NPS}\end{equation*} In this case we say that the group $G$ together with the commutative ring $R$ has the \emph{subgroup normalizer property}.

It is interesting to investigate for which groups \NP{H}{G}{R} holds and possibly find counterexamples with $G$ having smaller order than the previous mentioned example of Hertweck (there $|G| = 2^{25}\cdot 97^2$; this group is so far the only known counterexample to \NPG{G}{\mathbb{Z}}): by doing this, we hope to obtain a better insight into the structural properties that cause such phenomena and we aim to be eventually able to construct other counterexamples to \NPG{G}{\mathbb{Z}}. This article should be seen as a first step where to find such examples.
 
The article is organized as follows: the problem is reformulated in terms of automorphisms, then, in \cref{cyclic}, we prove that \NP{H}{G}{R} holds whenever $H$ is cyclic. In \cref{nilpotent} we first prove a generalization of the famous `Coleman Lemma', which is subsequently used to show that \SNP{G}{R} holds for locally nilpotent torsion groups. In the last section we establish \SNP{G}{R} for certain metacyclic groups. This article presents some results of the author's PhD thesis \cite{BaPhD}.
 
The notation is mainly standard. Let $x$ and $y$ be elements of a group. By $x^y = y^{-1}xy$ we denote the conjugate of $x$ by $y$ and by $x^G$ the $G$-conjugacy class of $x$. Furthermore, $[x,y] = x^{-1}y^{-1}xy$ is the commutator of $x$ and $y$. For subsets $X$ and $Y$ of a group, $[X,Y]$ denotes the subgroup generated by all commutators $[x, y]$ for $x \in X$ and $y \in Y$ (if a set is a singleton we omit the set braces). For the order of an element $x$ we write $o(x)$. Moreover, $\operatorname{Aut}(G)$ denotes the group of group automorphisms of the group $G$ and $\operatorname{Inn}(G)$ its subgroup of automorphisms induced by conjugation $\operatorname{conj}(x) = (g \mapsto g^x = x^{-1}gx)$ by an element $x \in G$. Note that homomorphisms are written from the right (with exception of the augmentation of the group ring).  We always assume that a ring has an identity element. By $\mathrm{U}(R)$ we denote the group of units of a ring $R$.

\section{Automorphisms}\label{auto}
 
 The question \NP{H}{G}{R} can be restated using automorphisms.
 
 \begin{definition} Let $G$ be a group, $H \leq G$ and $R$ be a commutative ring. Set \[\operatorname{Aut}_G(H) = \set{\varphi \in \operatorname{Aut}(H)}{\varphi = \operatorname{conj}(g) \text{ for some } g \in \textup{N}_G(H)}\] and  \[\operatorname{Aut}_{RG}(H) = \set{\varphi \in \operatorname{Aut}(H)}{\varphi = \operatorname{conj}(u) \text{ for some } u \in \textup{N}_{\textup{U}(RG)}(H)},\] the groups of automorphisms of $H$ induced by elements of $G$ and $\textup{U}(RG)$, respectively. \end{definition}
  
 Clearly, for $H \leq G$ we have $\operatorname{Inn}(H) \leq \operatorname{Aut}_G(H) \leq \operatorname{Aut}_{RG}(H) \leq \operatorname{Aut}(H)$.

 \begin{lemma} For a group $G$ with subgroup $H$ and a commutative ring $R$ the following statements are equivalent
 \begin{enumerate}
  \item \NP{H}{G}{R} holds.
  \item For every $u \in \textup{N}_{\textup{U}(RG)}(H)$ there exists a $g \in \textup{N}_G(H)$ such that $[ug, H] = 1$.
  \item $\operatorname{Aut}_{RG}(H) = \operatorname{Aut}_G(H)$. \hfill $\Box$
 \end{enumerate}
 \end{lemma}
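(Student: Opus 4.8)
The plan is to read the three statements as successive reformulations and to prove them equivalent by unwinding the definitions, via the two equivalences (1) $\Leftrightarrow$ (2) and (2) $\Leftrightarrow$ (3). Throughout I would keep the paper's convention that conjugation acts on the right, so that $\operatorname{conj}(xy) = \operatorname{conj}(x)\operatorname{conj}(y)$ whenever both sides are defined, and $\operatorname{conj}(c)|_H = \operatorname{id}_H$ precisely when $c$ centralizes $H$. I would also record at the start the one inclusion that is automatic: because $\textup{N}_G(H)$ and $\textup{C}_{\textup{U}(RG)}(H)$ both lie in the group $\textup{N}_{\textup{U}(RG)}(H)$, we always have $\textup{N}_G(H)\cdot\textup{C}_{\textup{U}(RG)}(H) \subseteq \textup{N}_{\textup{U}(RG)}(H)$; hence statement (1) is equivalent to the reverse inclusion, and it is only this inclusion that the other two conditions need to encode.

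For (2) $\Leftrightarrow$ (3) I would argue purely formally. Since $\operatorname{Aut}_G(H) \leq \operatorname{Aut}_{RG}(H)$ always holds, the equality in (3) is equivalent to the inclusion $\operatorname{Aut}_{RG}(H) \leq \operatorname{Aut}_G(H)$, i.e.\ to the assertion that for every $u \in \textup{N}_{\textup{U}(RG)}(H)$ the automorphism $\operatorname{conj}(u)|_H$ equals $\operatorname{conj}(g)|_H$ for some $g \in \textup{N}_G(H)$. Now $\operatorname{conj}(u)|_H = \operatorname{conj}(g)|_H$ is the same as $\operatorname{conj}(ug^{-1})|_H = \operatorname{id}_H$, that is $[ug^{-1}, H] = 1$; as the map $g \mapsto g^{-1}$ permutes $\textup{N}_G(H)$, this matches (2) verbatim.

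For (1) $\Rightarrow$ (2), given $u \in \textup{N}_{\textup{U}(RG)}(H)$ I would use (1) to write $u = g_0 c$ with $g_0 \in \textup{N}_G(H)$ and $c \in \textup{C}_{\textup{U}(RG)}(H)$, and then take $g = g_0^{-1} \in \textup{N}_G(H)$: since $\operatorname{conj}(c)|_H = \operatorname{id}_H$, we get $\operatorname{conj}(ug)|_H = \operatorname{conj}(g_0)\operatorname{conj}(c)\operatorname{conj}(g_0^{-1})|_H = \operatorname{id}_H$, so $[ug,H] = 1$. Conversely, for (2) $\Rightarrow$ (1) I would pick, for a given $u$, some $g \in \textup{N}_G(H)$ with $c := ug \in \textup{C}_{\textup{U}(RG)}(H)$, so that $u = cg^{-1} = g^{-1}(gcg^{-1})$, placing $u$ in $\textup{N}_G(H)\cdot\textup{C}_{\textup{U}(RG)}(H)$.

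The only point that is not mere bookkeeping — and hence the step I would flag as the (minor) obstacle — is the last rewriting: one must check that $gcg^{-1}$ again centralizes $H$. This uses that $g$ normalizes $H$, for then conjugation by $g$ stabilizes $\textup{C}_{\textup{U}(RG)}(H)$; indeed for $h \in H$ one has $g^{-1}hg \in H$, whence $gcg^{-1}$ commutes with $h$. Equivalently I could note once and for all that $\textup{N}_G(H)$ normalizes $\textup{C}_{\textup{U}(RG)}(H)$, so that $\textup{N}_G(H)\cdot\textup{C}_{\textup{U}(RG)}(H)$ is a subgroup and the order of the two factors is irrelevant. With this observation in hand, every remaining implication is a direct substitution.
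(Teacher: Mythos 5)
Your proof is correct and complete. The paper states this lemma without proof (the terminal $\Box$ signals it is regarded as immediate from the definitions), and your argument is exactly the intended routine unwinding — including the one genuinely non-trivial bookkeeping point, that $\textup{N}_G(H)$ normalizes $\textup{C}_{\textup{U}(RG)}(H)$ so the factors in the product can be reordered.
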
 
 
\begin{remark} Assume that the outer automorphism group $\operatorname{Aut}(H)/\operatorname{Inn}(H)$ of a group $H$ is trivial. Then \NP{H}{G}{R} holds for all $H \leq G$ and all commutative rings $R$. (This happens, for example, if $H \simeq \operatorname{Aut}(S)$ for a simple, non-abelian group $S$, e.g.\ for $H \simeq S_m$ with $m \not= 6$.)\\
Note also that \SNP{G}{R} clearly holds, provided the units of $RG$ are trivial (i.e.\ $\textup{U}(RG) = \mathrm{U}(R)G$). \end{remark}

 \section{Cyclic Subgroups}\label{cyclic}

We prove that \NP{H}{G}{R} holds for all commutative rings $R$, if $H$ is cyclic (the calculations needed are similar to those which occurred in the proof of \cite[Theorem 3.1]{LoPo} and the simplified proof of \cite[17.3 Theorem]{HeHabil} suggested by I.B.S.~Passi). We need the following definitions: For $x, y \in RG$ we define $[x, y]_L = xy - yx$, the \emph{additive commutator} of $x$ and $y$. By $[RG, RG]_L$ we denote the $R$-submodule of $RG$ generated by all $[x, y]_L$, for $x, y \in RG$. Note that the map ${[-, =]_L \colon} RG \times RG \to RG \colon (x, y) \mapsto [x, y]_L$ is $R$-bilinear.
Let $\operatorname{ccl}(G)$ denote the collection of all conjugacy classes of $G$. For $C \in \operatorname{ccl}(G)$ define $\varepsilon_C \colon RG \to R \colon \sum_{g \in G} u_g g \mapsto \sum_{g \in C} u_g$, the \emph{partial augmentation map} with respect to the conjugacy class $C$.
It is straight forward to check that \[ [RG, RG]_L = \set{u \in RG}{\forall \ C \in \operatorname{ccl}(G) \colon \varepsilon_C(u) = 0}. \]

\begin{lemma}\label{prop:automorphism_from_supergroupring_leaves_ccl_invariant} Let $H \leq G$, $R$ a commutative ring, $\sigma \in \operatorname{Aut}_{RG}(H)$. Then $x\sigma \in x^G$ for every $x \in H$. 
\end{lemma}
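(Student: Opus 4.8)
The plan is to exploit the description of $[RG, RG]_L$ in terms of partial augmentations that was established just before the statement. The two facts I would combine are: (i) conjugation by a unit does not change any partial augmentation, because the difference $u^{-1}xu - x$ is an additive commutator; and (ii) for a single group element $g \in G$ the partial augmentations $\varepsilon_C(g)$ simply record the conjugacy class of $g$. Together these force $x\sigma$ and $x$ to lie in the same $G$-conjugacy class.

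First I would unwind the hypothesis. Since $\sigma \in \operatorname{Aut}_{RG}(H)$, there is a unit $u \in \textup{N}_{\textup{U}(RG)}(H)$ with $\sigma = \operatorname{conj}(u)$, so that $x\sigma = u^{-1}xu$ for every $x \in H$. Observe that $x\sigma$ is the image of $x$ under an automorphism of $H$, hence $x\sigma \in H \leq G$ is again an honest group element; this is what makes step (ii) applicable at the end.

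Next I would fix $x \in H$ and an arbitrary $C \in \operatorname{ccl}(G)$ and rewrite $x\sigma$ as a product in the opposite order. Setting $a = u^{-1}x$ and $b = u$, we have $ab = u^{-1}xu = x\sigma$ while $ba = u\,u^{-1}x = x$, so $x\sigma - x = ab - ba = [a, b]_L \in [RG, RG]_L$. By the displayed equality $[RG,RG]_L = \set{v \in RG}{\forall\, C \in \operatorname{ccl}(G)\colon \varepsilon_C(v) = 0}$ together with the $R$-linearity of $\varepsilon_C$, this yields $\varepsilon_C(x\sigma) = \varepsilon_C(x)$ for every conjugacy class $C$.

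Finally I would read off the conclusion from these equalities. For any $g \in G$ one has $\varepsilon_C(g) = 1$ if $g \in C$ and $\varepsilon_C(g) = 0$ otherwise, so taking $C = x^G$ gives $\varepsilon_{x^G}(x) = 1$, whence $\varepsilon_{x^G}(x\sigma) = 1$ as well; since $x\sigma$ is a group element, this means precisely $x\sigma \in x^G$. I do not anticipate a real obstacle here: the only point requiring a little care is the cyclic rearrangement $ab \leftrightarrow ba$ so that the additive-commutator characterization applies verbatim, and the elementary remark that the partial augmentation of a group element is the indicator function of its conjugacy class. Everything else is immediate from the material already set up.
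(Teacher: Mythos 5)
Your proof is correct and follows exactly the same route as the paper: writing $x\sigma - x = [u^{-1}x,u]_L \in [RG,RG]_L$, invoking the partial-augmentation description of $[RG,RG]_L$, and concluding from $x\sigma \in H \leq G$ that $x\sigma \in x^G$. The extra details you spell out (the indicator-function behaviour of $\varepsilon_C$ on group elements) are precisely what the paper leaves implicit.
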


\begin{proof} There is an element $u \in \textup{N}_{\textup{U}(RG)}(H)$ such that $\sigma = \operatorname{conj}(u)$. We have \[ x\sigma - x = u^{-1}xu - x = [u^{-1}x, u]_L \in [RG, RG]_L, \] and consequently $\varepsilon_C(x \sigma) = \varepsilon_C(x)$ for all $C \in \operatorname{ccl}(G)$. Together with $x\sigma \in H \leq G$ this implies $x\sigma \in x^G$. \end{proof}

\begin{proposition}\label{prop:NP_for_cyclic_subgroups} Let $H \leq G$. If $H$ is cyclic, then \NP H G R holds for commutative rings $R$.\end{proposition}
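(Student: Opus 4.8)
The plan is to verify condition~(3) of the reformulation lemma in Section~\ref{auto}, namely that $\operatorname{Aut}_{RG}(H) = \operatorname{Aut}_G(H)$. The inclusion $\operatorname{Aut}_G(H) \leq \operatorname{Aut}_{RG}(H)$ is automatic, so only $\operatorname{Aut}_{RG}(H) \leq \operatorname{Aut}_G(H)$ needs work. I would fix a generator $x$ of the cyclic group $H$, so $H = \langle x\rangle$, and take an arbitrary $\sigma \in \operatorname{Aut}_{RG}(H)$, induced by some $u \in \textup{N}_{\textup{U}(RG)}(H)$ via $\sigma = \operatorname{conj}(u)$. The crucial structural feature of the cyclic case is that $\sigma$ is completely determined by the single element $x\sigma$, and that $x\sigma$ is again a generator of $H$.

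The key step is to feed this into \cref{prop:automorphism_from_supergroupring_leaves_ccl_invariant}, which gives $x\sigma \in x^G$, i.e.\ there is an element $g \in G$ with $x\sigma = x^g$. I would then exploit that $x\sigma$ is a \emph{generator}: from $H = \langle x\sigma\rangle = \langle x^g\rangle = \langle x\rangle^g = H^g$ it follows that $g$ lies in $\textup{N}_G(H)$. Consequently $\operatorname{conj}(g)$ restricts to an automorphism of $H$, and this restriction sends the generator $x$ to $x^g = x\sigma$; since two automorphisms of a cyclic group coincide as soon as they agree on a generator, $\operatorname{conj}(g)|_H = \sigma$. Hence $\sigma \in \operatorname{Aut}_G(H)$, and as $\sigma$ was arbitrary this yields $\operatorname{Aut}_{RG}(H) = \operatorname{Aut}_G(H)$, proving \NP{H}{G}{R} via the reformulation lemma.

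The genuine analytic content of the argument has in fact already been absorbed into \cref{prop:automorphism_from_supergroupring_leaves_ccl_invariant}: the additive-commutator and partial-augmentation bookkeeping there is precisely what forces $x\sigma$ into the $G$-conjugacy class of $x$. Given that lemma I expect no serious obstacle, and the remaining points requiring care are organizational ones. First, because homomorphisms are composed on the right, I must track the composition convention so that producing $g \in \textup{N}_G(H)$ with $\operatorname{conj}(g)|_H = \sigma$ matches condition~(2); concretely, $\operatorname{conj}(ug^{-1})|_H = \sigma \cdot (\operatorname{conj}(g)|_H)^{-1} = \operatorname{id}$, so $ug^{-1}$ centralizes $H$ and $[ug^{-1}, H] = 1$. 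Second, the inference ``$x\sigma$ generates $H$, therefore $g$ normalizes $H$'' is exactly where cyclicity is indispensable and is the crux of the proposition: for a non-cyclic $H$ the conjugacy-class information about a single generator would neither determine $\sigma$ nor force the conjugating element into $\textup{N}_G(H)$. I would finally remark that the argument is uniform in the order of $x$, finite or infinite, since in either case an automorphism of $H$ carries $x$ to another generator.
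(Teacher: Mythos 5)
Your proposal is correct and follows exactly the paper's argument: apply \cref{prop:automorphism_from_supergroupring_leaves_ccl_invariant} to a generator $x$ of $H$ to get $x\sigma = x^g$ for some $g \in G$, then conclude $\sigma = \operatorname{conj}(g) \in \operatorname{Aut}_G(H)$. You merely make explicit two steps the paper leaves implicit — that $g \in \textup{N}_G(H)$ because $x\sigma$ generates $H$, and that automorphisms of a cyclic group agreeing on a generator coincide — which is a faithful filling-in, not a different route.
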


\begin{proof} Let $H = \langle x \rangle$ and $\sigma \in \operatorname{Aut}_{RG}(H)$, then $x\sigma \in x^G$ by \cref{prop:automorphism_from_supergroupring_leaves_ccl_invariant} and hence there is an element $g \in G$ such that $x\sigma = x\operatorname{conj}(g)$, so $\sigma = \operatorname{conj}(g) \in \operatorname{Aut}_G(H)$. \end{proof}

 \section{Nilpotent Groups}\label{nilpotent}
 
 For a rational prime $p$, the Coleman Lemma (cf.\ \cite{Col}) asserts that {\NP H G R} holds, provided $H$ is a (possibly infinite) $p$-group and $p$ is not invertible in $R$. We prove the following generalization, following the line of \cite[19.4 Lemma]{HeHabil} to boil it down to a finite group problem by considering the action on the support: For an element $u = \sum_{g \in G} u_g g \in RG$ define by $\operatorname{supp}(u) = \set{g \in G}{ u_g \not= 0}$ the \emph{support of $u$}. Note that the group ring $RG$ is equipped with an augmentation homomorphism $\varepsilon \colon RG \to R$ sending an element (expressed as a linear combination with respect to the basis $G$) to the sum of its coefficients. In particular, $\varepsilon(u) \in \textup{U}(R)$ for every unit $u \in \textup{U}(RG)$.
 
 \begin{lemma}[Coleman Lemma, relative version]  \label{prop:coleman_lemma_relative} Let $H \leq G$ and $R$ a commutative ring and $p$ a rational prime such that $p \not\in \textup{U}(R)$. Let $u \in \textup{N}_{\textup{U}(RG)}(H)$. Then there exists $P \leq H$ with $|H:P| < \infty$, $p \nmid |H:P|$ and $x \in \operatorname{supp}(u) \cap \textup{N}_G(P)$ such that $x^{-1}u \in \textup{C}_{\textup{U}(RG)}(P)$. \end{lemma}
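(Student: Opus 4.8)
The plan is to translate the normalizing condition into a finite permutation action on the support of $u$, reduce the coefficients to characteristic $p$, and then run a counting argument with Sylow subgroups. First I would fix $\sigma=\operatorname{conj}(u)\in\operatorname{Aut}_{RG}(H)$, so that $h\sigma=u^{-1}hu$ and hence $hu=u(h\sigma)$ for all $h\in H$. Writing $u=\sum_g u_g g$, setting $S=\operatorname{supp}(u)$ (a finite set) and comparing coefficients in $hu=u(h\sigma)$ yields $u_s=u_{h^{-1}s(h\sigma)}$ for all $h\in H$ and $s\in G$. Thus $H$ acts on the finite set $S$ on the right by $s\cdot h=h^{-1}s(h\sigma)$, the coefficient function $s\mapsto u_s$ is constant on $H$-orbits, and $\operatorname{Stab}(s)=\{h\in H:h\sigma=h^{s}\}$; in particular $\sigma$ and $\operatorname{conj}(s)$ agree on $\operatorname{Stab}(s)$. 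I would also record what the conclusion really asks: using $\operatorname{conj}(u)=\operatorname{conj}(x)\operatorname{conj}(x^{-1}u)$, if $x\in\textup{N}_G(P)$ and $x^{-1}u\in\textup{C}_{\textup{U}(RG)}(P)$ then $P$ is $\sigma$-invariant with $\sigma|_P=\operatorname{conj}(x)|_P$ (equivalently $P\le\operatorname{Stab}(x)$); conversely, if $P$ is $\sigma$-invariant with $P\le\operatorname{Stab}(x)$, then automatically $x\in\textup{N}_G(P)$ and $x^{-1}u\in\textup{C}_{\textup{U}(RG)}(P)$, the latter because $x^{-1}u$ centralizes $h\in H$ precisely when $xhx^{-1}=h\sigma^{-1}$. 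So the whole problem becomes: find $x\in S$ and a $\sigma$-invariant $P\le\operatorname{Stab}(x)$ with $p\nmid[H:P]$.

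Second, I would reduce the coefficients to characteristic $p$. Since $p\notin\textup{U}(R)$, I choose a maximal ideal $\mathfrak m$ of $R$ with $p\in\mathfrak m$ and pass to the field $k=R/\mathfrak m$ of characteristic $p$; the image $\bar u\in\textup{U}(kG)$ still normalizes $H$ and induces the same $\sigma$, and $\operatorname{supp}(\bar u)\subseteq S$. As the conditions isolated above are purely group-theoretic (they involve $P$, $x$ and $\sigma$, not the coefficients), it suffices to produce the required $x\in\operatorname{supp}(\bar u)$ and $P$ over $k$; the assertion $x^{-1}u\in\textup{C}_{\textup{U}(RG)}(P)$ then transfers back to $R$. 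This is what makes the orbit counting clean: in $k$ every orbit of $p$-power length $>1$ contributes $0$ to the augmentation.

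Third, following the line of \cite[19.4 Lemma]{HeHabil}, I would boil the statement down to a finite group problem. The kernel of the $H$-action on the finite set $S$ has finite index, so the relevant data lives in a finite quotient of $H$; the technical point of this step is to arrange matters (working modulo a $\sigma$-invariant finite-index normal subgroup) so that $\sigma$ descends, after which I may assume $H$ finite. Now I fix a Sylow $p$-subgroup $Q$ of $H$ and restrict the action to $Q$. Every nontrivial $Q$-orbit has length divisible by $p$, and $u_s=u_{s\cdot q}$, so in $k$ the augmentation satisfies $\varepsilon(\bar u)=\sum_{s\in S^{Q}}\bar u_s$, where $S^{Q}$ denotes the set of $Q$-fixed points. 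Since $\bar u$ is a unit, $\varepsilon(\bar u)\neq 0$, whence $S^{Q}\neq\varnothing$: there is $x\in\operatorname{supp}(\bar u)$ with $Q\le\operatorname{Stab}(x)$, that is, $\sigma|_Q=\operatorname{conj}(x)|_Q$.

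Finally, I would take $P$ to be the largest $\sigma$-invariant subgroup of $\operatorname{Stab}(x)$, namely $P=\bigcap_{n\in\mathbb Z}\operatorname{Stab}(x)\sigma^{n}$. By construction $P$ is $\sigma$-invariant with $\sigma|_P=\operatorname{conj}(x)|_P$, so by the first step $x\in\textup{N}_G(P)$ and $x^{-1}u\in\textup{C}_{\textup{U}(RG)}(P)$; it remains only to verify $p\nmid[H:P]$. I expect this last point to be the main obstacle: it is equivalent to placing a full Sylow $p$-subgroup inside $P$, i.e.\ to finding a $\sigma$-invariant Sylow $p$-subgroup of $H$ contained in $\operatorname{Stab}(x)$. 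The third step already shows that every Sylow $p$-subgroup of $H$ lies in some $\operatorname{Stab}(s)$; I would combine this with the fact that $\sigma$ permutes $\operatorname{Syl}_p(H)$ (a set of size $\equiv 1\pmod p$) and a fixed-point/coprime-action count on $\operatorname{Syl}_p(H)$ to produce a $\sigma$-stable Sylow subgroup, and then re-run the counting of the third step with that particular $Q$ so as to obtain a matching $x$. Once a $\sigma$-invariant Sylow subgroup sits inside $\operatorname{Stab}(x)$, it is contained in $P$, giving $p\nmid[H:P]$ and completing the proof.
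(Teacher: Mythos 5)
Your first step is correct and in fact sharper than the paper's own write-up. You set up exactly the action $s\cdot h=h^{-1}s(h\sigma)$ on $\operatorname{supp}(u)$ that the paper uses, observe that the coefficients are constant on orbits, and correctly isolate what the conclusion really requires: an $x\in\operatorname{supp}(u)$ together with a \emph{$\sigma$-invariant} $P\le\operatorname{Stab}(x)$ of finite index prime to $p$. (Indeed, the fixed-point property alone only gives $ux^{-1}\in\textup{C}_{\textup{U}(RG)}(P)$; it is $\sigma$-invariance of $P$ that yields $x\in\textup{N}_G(P)$ and lets one pass from $ux^{-1}$ to $x^{-1}u=x^{-1}(ux^{-1})x\in\textup{C}_{\textup{U}(RG)}(P)^x=\textup{C}_{\textup{U}(RG)}(P)$.) Your reduction modulo a maximal ideal containing $p$ is a harmless variant of the paper's count, which works directly over $R$: if the $p$-group $P/K$ had no fixed point, every orbit length would be divisible by $p$, forcing $\varepsilon(u)\in pR$, contradicting $\varepsilon(u)\in\textup{U}(R)$ and $p\notin\textup{U}(R)$. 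The paper itself takes $K\le P\le H$ with $P/K$ a Sylow $p$-subgroup of the finite group $H/K$ ($K$ the kernel of the action), produces the fixed point, and stops; it does not discuss $\sigma$-invariance at all (in the paper's later applications $H$ is a $p$-group, so necessarily $P=H$ and $P\sigma=P$ is automatic).

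The difficulty you flagged is therefore real, but your route to resolving it contains a genuine error and an unresolved reduction. The decisive error is in the last step: you invoke the principle that an automorphism $\sigma$ permuting $\operatorname{Syl}_p(H)$, a set of size $\equiv 1\pmod p$, must fix a member. This is false: the orbits of $\langle\sigma\rangle$ have lengths dividing $o(\sigma)$, which has nothing to do with $p$, so the congruence forces nothing. Concretely, take $H=S_3$, $p=2$, and $\sigma$ conjugation by a $3$-cycle: it permutes the three Sylow $2$-subgroups in a single orbit of length $3$, and $3\equiv 1\pmod 2$. Fixed-point counting modulo $p$ needs a $p$-group acting, and a coprime-action argument needs $(o(\sigma),|H|)=1$; the automorphism $\sigma=\operatorname{conj}(u)|_H$ satisfies neither in general. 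Note also that you have replaced the goal by a strictly stronger one: a $\sigma$-invariant $P$ with $p\nmid |H:P|$ must contain \emph{some} Sylow $p$-subgroup of $H$, but need not contain a $\sigma$-invariant one, so your intermediate target (a $\sigma$-invariant Sylow subgroup inside a stabilizer) may be unattainable even where the lemma holds. The second gap is the reduction to finite $H$: for $\sigma$ to descend you need a $\sigma$-invariant normal subgroup of finite index acting trivially on $\operatorname{supp}(u)$, but the natural candidate $K$ is not $\sigma$-invariant in general — from $k\sigma=k^s$ for all $k\in K$ and $s\in\operatorname{supp}(u)$ one gets $K\sigma=K^s$, which equals $K$ only when the support elements normalize $K$. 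You call this ``the technical point'' and leave it open; the paper needs no such descent, because its counting uses only the $H$-action, which factors through the finite group $H/K$ whether or not $\sigma$ descends.
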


 \begin{proof} Let $H \leq G$ and $u = \sum_{g \in G} u_g g \in \textup{N}_{\textup{U}(RG)}(H)$. For every $h \in H$ we have \begin{equation} \sum_{g \in G} u_g g = \sum_{g \in G} u_g h^{-1}gh^u, \tag{*}\label{eq:coefficients_under_operation} \end{equation} and hence \begin{equation*} g \in \operatorname{supp}(u) \quad \Leftrightarrow \quad \forall \ h \in H \colon h^{-1}gh^u \in \operatorname{supp}(u).  \end{equation*} In particular, we obtain the following (right) action of the group $H$ on the support of $u$: \begin{equation*} \begin{split} \begin{array}{ccc} \operatorname{supp}(u) \times H & \to & \operatorname{supp}(u) \\ (x, h) & \mapsto & h^{-1}xh^u. \end{array} \end{split} \end{equation*} The coefficients $u_g$ of $u$ are constant on the orbits of this action by \eqref{eq:coefficients_under_operation}. Let \[ K = \set{h \in H}{\forall \ x \in \operatorname{supp}(u) \colon h^{-1}xh^u = x} \unlhd H\] be the kernel of the action. 
Then $H/K$ is isomorphic to a subgroup of the finite group $\mathrm{Sym}(\operatorname{supp}(u))$. Let $K \leq P \leq 
H$ such that $P/K$ is a Sylow $p$-subgroup of $H/K$. The induced action of the $p$-group $P/K$ on $\operatorname{supp}(u)$ must have a fixed point $x \in \operatorname{supp}(u)$, as $\varepsilon(u) \in \textup{U}(R)$ and $p \not\in \textup{U}(R)$ by assumption on $R$. This implies $x^{-1}u \in \textup{C}_{\textup{U}(RG)}(P)$.
\end{proof}
 
The subgroup normalizer property behaves well with respect to direct products: Let $G_1$ and $G_2$ be groups and $j \in \{1, 2\}$. The natural projections ${\pi_j \colon G_1 \times G_2 \to G_j}$ give rise (by the universal property of the group ring) to ring homomorphisms $R [G_1 \times G_2] \to R G_j$, which can be restricted to homomorphisms of the unit groups, $\Pi_j \colon \textup{U}(R [G_1 \times G_2]) \to \textup{U}(R G_j)$ (the box brackets are included for better readability). With the obvious inclusion maps we have the following diagram

\begin{center}
\begin{tikzpicture}
  \matrix (m) [matrix of math nodes, row sep=4em,
  column sep=3em, text height=1.5ex, text depth=0.25ex]
  { G_1 & G_1 \times G_2 &  G_2  \\
    \textup{U}(R G_1) & \textup{U}(R [G_1 \times G_2])  & \textup{U}(R G_2) \\ };
    \path[right hook->]
       (m-1-1)  edge[transform canvas={yshift=0.5ex}] (m-1-2)
       (m-1-1)  edge (m-2-1)    
       (m-1-3)  edge (m-2-3)
       (m-1-3)  edge[transform canvas={yshift=-0.5ex}] (m-1-2)
       (m-1-2) edge (m-2-2)
       (m-2-1) edge[transform canvas={yshift=0.5ex}] (m-2-2)
       (m-2-3) edge[transform canvas={yshift=-0.5ex}] (m-2-2);
    \path[->]
       (m-1-2) edge[transform canvas={yshift=-0.5ex}] node[below] {$\pi_1$} (m-1-1)
       (m-1-2) edge[transform canvas={yshift=0.5ex}] node[above] {$\pi_2$} (m-1-3)
       (m-2-2) edge[transform canvas={yshift=-0.5ex}] node[below] {$\Pi_1$} (m-2-1) 
       (m-2-2) edge[transform canvas={yshift=0.5ex}] node[above] {$\Pi_2$} (m-2-3);
\end{tikzpicture}
\end{center}

Elements are identified with their images under the inclusion maps.

\begin{lemma}\label{prop:NP_can_be_checkd_on_factors} Let $G = G_1 \times G_2$, $R$ a commutative ring, and $H \leq G$. Assume that \NP{H\pi_1}{G\pi_1}{R} and \NP{H\pi_2}{G\pi_2}{R} hold, then also \NP{H}{G}{R} holds. \end{lemma}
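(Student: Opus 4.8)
The plan is to use the automorphism reformulation from the earlier lemma and reduce to showing the inclusion $\operatorname{Aut}_{RG}(H) \subseteq \operatorname{Aut}_G(H)$, since the reverse inclusion is automatic. So I would start with an arbitrary $u \in \textup{N}_{\textup{U}(RG)}(H)$, set $\sigma = \operatorname{conj}(u) \in \operatorname{Aut}(H)$, and form the two projected units $u_j = u\Pi_j \in \textup{U}(RG_j)$ for $j \in \{1, 2\}$. The goal is to produce a single element $g \in \textup{N}_G(H)$ with $\operatorname{conj}(g)|_H = \sigma$.

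First I would check that each $u_j$ normalizes $H\pi_j = H\Pi_j$. This is pure functoriality: since $\Pi_j$ comes from a ring homomorphism $R[G_1 \times G_2] \to RG_j$, it respects conjugation, so for every $h \in H$ one has $(h\sigma)\pi_j = (u^{-1}hu)\Pi_j = u_j^{-1}(h\pi_j)u_j = (h\pi_j)\operatorname{conj}(u_j)$, and as $h\sigma \in H$ this shows $u_j$ sends $H\pi_j$ into (hence, by bijectivity of $\sigma$, onto) $H\pi_j$. Now the hypothesis \NP{H\pi_j}{G\pi_j}{R} lets me write $u_j = g_j c_j$ with $g_j \in \textup{N}_{G_j}(H\pi_j)$ and $c_j \in \textup{C}_{\textup{U}(RG_j)}(H\pi_j)$. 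Because $c_j$ centralizes $H\pi_j$ and $g_j$ normalizes it, a one-line computation gives $y\operatorname{conj}(u_j) = c_j^{-1}(y\operatorname{conj}(g_j))c_j = y\operatorname{conj}(g_j)$ for all $y \in H\pi_j$; that is, $\operatorname{conj}(u_j)$ and $\operatorname{conj}(g_j)$ agree on $H\pi_j$. Combined with the displayed compatibility relation this yields the key identity
\[ (h\sigma)\pi_j = (h\pi_j)\operatorname{conj}(g_j) = (g_j^{-1}(h\pi_j)g_j) \qquad (h \in H,\ j \in \{1,2\}). \]

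Finally I would set $g = (g_1, g_2) \in G_1 \times G_2 = G$ and compare $h\operatorname{conj}(g) = (g_1^{-1}(h\pi_1)g_1,\, g_2^{-1}(h\pi_2)g_2)$ with $h\sigma$. By the key identity these two elements of $G$ have equal images under both $\pi_1$ and $\pi_2$, and since an element of $G = G_1 \times G_2$ is determined by its two projections, they coincide: $h\operatorname{conj}(g) = h\sigma$ for every $h \in H$. In particular $g^{-1}Hg = H\sigma = H$, so $g \in \textup{N}_G(H)$ and $\sigma = \operatorname{conj}(g)|_H \in \operatorname{Aut}_G(H)$, which finishes the proof. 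The step I expect to be the real obstacle is precisely this last gluing: the factor-wise elements $g_1, g_2$ a priori only normalize $H\pi_1$ and $H\pi_2$, and the naive product normalizes the possibly much larger group $H\pi_1 \times H\pi_2 \supseteq H$; what forces $(g_1,g_2)$ to normalize $H$ itself is the compatibility of $\sigma$ with the projections, and this in turn hinges on using that each $c_j$ \emph{centralizes} $H\pi_j$ so that $\operatorname{conj}(u_j)$ and $\operatorname{conj}(g_j)$ cannot be distinguished on $H\pi_j$.
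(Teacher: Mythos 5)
Your proof is correct and follows essentially the same route as the paper: project $u$ to the factors, verify that each $u_j = u\Pi_j$ normalizes $H\pi_j$, apply the hypothesis to write $u_j = g_j c_j$, and glue $g = (g_1, g_2)$, which then acts on $H$ exactly as $u$ does. The only cosmetic difference is the ending: the paper exhibits the explicit factorization $u = (g_1g_2)(z_1z_2w)$, checking directly that $w = u_1^{-1}u_2^{-1}u$ and the $z_j$ centralize $H$, whereas you conclude abstractly via the automorphism reformulation from Section 2, which is equally valid.
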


\begin{proof} Let $u \in \textup{N}_{\textup{U}(RG)}(H)$. Set $H_j = H\pi_j \leq G_j$ and $u_j = u\Pi_j$. For every $x_1 \in H_1$ there exists $x_2 \in H_2$, such that $x_1x_2 \in H$. Now \[x_1^{u_1} = (x_1x_2)\pi_1^{u\Pi_1} = (x_1x_2)^u\pi_1 \in H_1, \] hence $u_1 \in \textup{N}_{\textup{U}(RG_1)}(H_1)$. Analogously, we see that $u_2 \in \textup{N}_{\textup{U}(RG_2)}(H_2)$, so we get from the assumption elements $g_j \in \textup{N}_{G_j}(H_j)$ and $z_j \in \textup{C}_{\textup{U}(RG_j)}(H_j)$ such that $u_j = g_jz_j$. To construct the corresponding units in the group ring of $G$, set $w = u_1^{-1}u_2^{-1}u$. This unit is centralizing $H$: To see this, take any $x \in H$, then \begin{align*} x^w &= ((x\pi_1)(x\pi_2))^{(u^{-1}\Pi_1)(u^{-1}\Pi_2)u} = \left((x\pi_1)^{(u^{-1}\Pi_1)} (x\pi_2)^{(u^{-1}\Pi_2)} \right)^u \\ &=  \left((x^{u^{-1}}) \pi_1 (x^{u^{-1}}) \pi_2 \right)^u = x. \end{align*} 
A similar calculation shows that the element $g$ defined as $g = g_1g_2$ acts on $H$ by conjugation like $u$ does, in particular $g \in \textup{N}_G(H)$. Obviously, $z_j \in \textup{C}_{\textup{U}(RG)}(H)$. Hence \[ u = u_1u_2u_1^{-1}u_2^{-1}u = (g_1g_2)(z_1z_2w) \in \textup{N}_G(H)\cdot\textup{C}_{\textup{U}(RG)}(H).\qedhere
\] \end{proof}
 
\begin{proposition}\label{prop:NPS_direct_products} If $G = G_1 \times G_2$ and $R$ is a commutative ring, then \[ \SNP{G}{R} \text{ holds} \quad \Leftrightarrow \quad \SNP{G_1}{R} \text{  and  } \SNP{G_2}{R} \text{ hold.} \]
\end{proposition}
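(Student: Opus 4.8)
The statement splits into two implications, which I would treat separately; the backward implication is essentially a restatement of \cref{prop:NP_can_be_checkd_on_factors}, while the forward implication requires pushing a factorization from the large group ring down to a factor.

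For the direction ``$\Leftarrow$'', I would assume \SNP{G_1}{R} and \SNP{G_2}{R} hold and fix an arbitrary $H \leq G$. Since the projection $\pi_j$ is surjective we have $G\pi_j = G_j$, so $H\pi_j$ is a subgroup of $G_j$, and the hypothesis \SNP{G_j}{R} yields that \NP{H\pi_j}{G_j}{R} holds for $j \in \{1,2\}$. Feeding these two facts into \cref{prop:NP_can_be_checkd_on_factors} gives \NP{H}{G}{R}, and as $H$ was arbitrary, \SNP{G}{R} follows.

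For the direction ``$\Rightarrow$'', I would assume \SNP{G}{R} holds and prove \SNP{G_1}{R}, the case of $G_2$ being symmetric. Fix a subgroup $H_1 \leq G_1$ and a unit $u \in \textup{N}_{\textup{U}(RG_1)}(H_1)$. Viewing $RG_1$ inside $R[G_1 \times G_2]$ via the inclusion, $u$ becomes a unit of $\textup{U}(RG)$ still normalizing $H_1 \leq G$, so $u \in \textup{N}_{\textup{U}(RG)}(H_1)$; the assumption \SNP{G}{R} then provides $g \in \textup{N}_G(H_1)$ and $z \in \textup{C}_{\textup{U}(RG)}(H_1)$ with $u = gz$. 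The plan is to push this factorization back down by applying $\Pi_1$: since the composite of the inclusion $\textup{U}(RG_1) \hookrightarrow \textup{U}(RG)$ with $\Pi_1$ is the identity on $RG_1$, we obtain $u = u\Pi_1 = (g\Pi_1)(z\Pi_1) = (g\pi_1)(z\Pi_1)$. It then remains to verify that the two factors land in the correct subgroups, which is where the commuting of $G_1$ and $G_2$ enters: writing $g = g^{(1)}g^{(2)}$ with $g^{(1)} \in G_1$ and $g^{(2)} \in G_2$, the factor $g^{(2)}$ centralizes every $x \in H_1 \leq G_1$, so $x^g = x^{g^{(1)}} = x^{g\pi_1}$ and hence $g\pi_1 \in \textup{N}_{G_1}(H_1)$; similarly, since $\Pi_1$ is a ring homomorphism and $x\Pi_1 = x$ for $x \in G_1$, we get $x^{z\Pi_1} = (x^z)\Pi_1 = x$, so $z\Pi_1 \in \textup{C}_{\textup{U}(RG_1)}(H_1)$. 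This exhibits $u \in \textup{N}_{G_1}(H_1)\cdot\textup{C}_{\textup{U}(RG_1)}(H_1)$, proving \NP{H_1}{G_1}{R}, and \SNP{G_1}{R} follows.

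I expect the only genuinely delicate point to be the bookkeeping in the ``$\Rightarrow$'' direction: one must be careful that the inclusion $G_1 \hookrightarrow G$ and the projection $\Pi_1$ compose to the identity on $RG_1$, so that passing a factorization obtained in the larger group ring back through $\Pi_1$ recovers $u$ itself rather than merely its image. Everything else is a routine verification resting on the fact that elements of $G_1$ and $G_2$ commute.
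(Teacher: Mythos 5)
Your proof is correct and follows essentially the same route as the paper: the ``$\Leftarrow$'' direction is exactly an application of \cref{prop:NP_can_be_checkd_on_factors}, and the ``$\Rightarrow$'' direction embeds $H_1$ and $u$ into $RG$ via the inclusion, factors $u = gz$ using \SNP{G}{R}, and pushes the factorization back down through $\Pi_1$. The only difference is that you spell out the verification that $g\pi_1$ normalizes and $z\Pi_1$ centralizes $H_1$, which the paper dismisses with ``clearly''.
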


\begin{proof} \begin{itemize}
 \item[``$\Rightarrow$'':] Let $H_1 \leq G_1$ and $u_1 \in \textup{N}_{\textup{U}(RG_1)}(H_1)$. Set $H = H_1 \times 1 \leq G$. Using the inclusion map, $u_1 \in \textup{N}_{\textup{U}(RG)}(H)$. By assumption there are $g \in \textup{N}_G(H)$ and $z \in \textup{C}_{\textup{U}(RG)}(H)$ such that $u_1 = gz$. Clearly $g_1 = g\pi_1 \in \textup{N}_{G_1}(H_1)$ and $z_1 = z\Pi_1 \in \textup{C}_{\textup{U}(RG_1)}(H_1)$. Hence we get $u_1 = u_1\Pi_1 = (gz)\Pi_1 = g_1z_1 \in \textup{N}_{G_1}(H_1) \cdot \textup{C}_{\textup{U}(R G_1)}(H_1)$.
 \item[``$\Leftarrow$'':] Follows from \cref{prop:NP_can_be_checkd_on_factors}.\qedhere
\end{itemize}
\end{proof}

Recall that a group is called \emph{locally nilpotent}, if every finite subset is contained in a nilpotent subgroup or, equivalently, every finitely generated subgroup is nilpotent.  For a group $G$, a \emph{$G$-adapted ring} is an integral domain of characteristic zero in which a rational prime $p$ is not invertible, whenever there exists an element of order $p$ in $G$. 

\begin{theorem}\label{prop:SNP_for_peridoic_locally_nilpotent_groups} Let $G$ be a locally nilpotent torsion group. Then \SNP{G}{R} holds for $G$-adapted rings $R$. In particular \SNP{G}{R} holds for finite nilpotent groups $G$ and $G$-adapted rings $R$.
\end{theorem}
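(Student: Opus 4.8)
The plan is to exploit the primary decomposition of $G$ to reduce everything to a single $p$-group, where the relative Coleman Lemma (\cref{prop:coleman_lemma_relative}) applies almost verbatim. Recall that a locally nilpotent torsion group is the restricted direct product $G = \prod_p G_p$ of its primary components, where $G_p$ is the subgroup of $p$-elements (in a locally nilpotent group, elements of coprime order commute); likewise $H = \prod_p H_p$ with $H_p = H \cap G_p$. Fix $u \in \textup{N}_{\textup{U}(RG)}(H)$. The decisive observation is that $\operatorname{supp}(u)$ is finite, so $u \in RG_S$ for the finite subproduct $G_S := \prod_{p \in S} G_p$, where $S$ is a suitable finite set of primes; applying the retraction $RG \to RG_S$ afforded by the projection $G \to G_S$ shows moreover that $u \in \textup{U}(RG_S)$. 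I would then check that $u$ normalizes $H_S := H \cap G_S$: for $x \in H_S$ the conjugate $x^u$ lies in $H$, and projecting onto $G_{S'}$ (which sends $u$ to the scalar $\varepsilon(u)$ and $x$ to $1$) shows the $G_{S'}$-component of $x^u$ is trivial, so $x^u \in H \cap G_S = H_S$; the same computation for $u^{-1}$ gives the reverse inclusion, whence $u \in \textup{N}_{\textup{U}(RG_S)}(H_S)$.

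Granting \SNP{G_S}{R} for the moment, write $u = gz$ with $g \in \textup{N}_{G_S}(H_S)$ and $z \in \textup{C}_{\textup{U}(RG_S)}(H_S)$. Since every element of $G_{S'}$ commutes with all of $RG_S$ and $H = H_S \times H_{S'}$, the factor $g \in G_S$ then normalizes $H$ and the factor $z$ centralizes $H$; thus $u \in \textup{N}_G(H) \cdot \textup{C}_{\textup{U}(RG)}(H)$, which is exactly \NP{H}{G}{R}. So it suffices to prove \SNP{G_S}{R}. As $G_S \leq G$, the ring $R$ is again $G_S$-adapted, and because $G_S$ is a \emph{finite} direct product of the $p$-groups $G_p$, iterating \cref{prop:NPS_direct_products} reduces the task to \SNP{G_p}{R} for a single, possibly infinite, $p$-group $G_p$.

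For this single-prime case, let $H \leq G_p$ and $u \in \textup{N}_{\textup{U}(RG_p)}(H)$; we may assume $G_p \neq 1$, so that $G_p$, and hence $G$, contains an element of order $p$ and $p \not\in \textup{U}(R)$ by $G$-adaptedness. \cref{prop:coleman_lemma_relative} then produces $P \leq H$ with $|H:P| < \infty$, $p \nmid |H:P|$, and $x \in \operatorname{supp}(u) \cap \textup{N}_{G_p}(P)$ satisfying $x^{-1}u \in \textup{C}_{\textup{U}(RG_p)}(P)$. The crucial point is that the $p$-group $H$ admits no proper subgroup of finite index coprime to $p$: the intersection $N$ of the (finitely many) conjugates of $P$ in $H$ is normal of finite index, so $H/N$ is a finite quotient of a $p$-group, hence a finite $p$-group, and therefore $|H:P|$ divides the $p$-power $|H:N|$; combined with $p \nmid |H:P|$ this forces $P = H$. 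Consequently $x \in \textup{N}_{G_p}(H)$ and $u = x(x^{-1}u) \in \textup{N}_{G_p}(H) \cdot \textup{C}_{\textup{U}(RG_p)}(H)$, establishing \NP{H}{G_p}{R}; as $H$ was arbitrary, \SNP{G_p}{R} holds. The final assertion is immediate, finite nilpotent groups being locally nilpotent torsion groups. I expect the main obstacle to be not the Coleman step—which collapses neatly once one sees that $P = H$—but the passage from the two-factor \cref{prop:NPS_direct_products} to the infinite restricted product: one must use the finiteness of $\operatorname{supp}(u)$ to localize at finitely many primes and transport the resulting decomposition back to $RG$, the content of the first two paragraphs.
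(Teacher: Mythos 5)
Your proof is correct and follows essentially the same route as the paper's: the primary decomposition $G = \mathrm{Dr}_p\, G_p$ of a locally nilpotent torsion group, localization to the finitely many primes occurring in $\operatorname{supp}(u)$, reduction to the individual primary components via \cref{prop:NP_can_be_checkd_on_factors} and \cref{prop:NPS_direct_products}, and finally \cref{prop:coleman_lemma_relative} for a single $p$-group. You are somewhat more explicit than the paper in two spots it leaves implicit --- handling the cofinite factor $H_{S'}$ by direct computation rather than through \cref{prop:NP_can_be_checkd_on_factors}, and spelling out (via the intersection of the conjugates of $P$) why the subgroup $P$ produced by the Coleman Lemma must equal $H$ when $H$ is a $p$-group --- but the architecture and the key lemmas invoked are identical.
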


\begin{proof} By \cite[12.1.1]{Rob} we have that $G = \mathrm{Dr}_p \; G_p$, where $p$ runs through all primes, $G_p$ denotes the unique maximal $p$-subgroup of $G$, and $\mathrm{Dr}$ stands for the restricted direct product (i.e.\ the subgroup of the direct product containing those elements for which all but finitely many coordinates are equal to the identity element). Let $H \leq G$ and $u \in \textup{N}_{\textup{U}(RG)}(H)$. Define the set $P$ of rational primes `occurring' in the support of $u$: \[P = \set{p \in \mathbb{N}}{p \textrm{ a prime and } \exists \, g \in \operatorname{supp}(u) \colon p \mid o(g)}.\] 
Now $G$ can be decomposed as $G = X \times Y$, where $X = \prod_{p \in P} G_p$ and $Y = \mathrm{Dr}_{p \not\in P} \; G_p$. It follows that $[u, Y] = 1$. Let $\kappa \colon RG \to RX$ denote the ringhomomorphism induced the projection $G \to X$. By \cref{prop:NP_can_be_checkd_on_factors} it is enough to show that $u\kappa$ acts on $H\kappa$ like an element of $G\kappa$. But this follows by induction on the finite number of primes in $P$, using \cref{prop:coleman_lemma_relative,prop:NP_can_be_checkd_on_factors}. 
\end{proof} 

\section{Metacyclic Groups and Groups of Small Order}\label{meta-cyclic}

\begin{lemma}\label{prop:NP_two_generators_big_centralizer} Let $H \leq G$ be a subgroup generated by two elements, $H = \langle c, d \rangle$, such that $c^G = c^{\textup{C}_G(d)}$, then \NP{H}{G}{R} holds for all commutative rings $R$. \end{lemma}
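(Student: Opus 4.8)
The plan is to verify condition (3) of the reformulation lemma, that is, to establish the equality $\operatorname{Aut}_{RG}(H) = \operatorname{Aut}_G(H)$. As the inclusion $\operatorname{Aut}_G(H) \leq \operatorname{Aut}_{RG}(H)$ is automatic, I would fix an arbitrary $\sigma \in \operatorname{Aut}_{RG}(H)$ and produce a single element $g_0 \in G$ whose conjugation action agrees with $\sigma$ on the two generators $c$ and $d$. Since $c$ and $d$ generate $H$ and $\sigma$ is an automorphism, such a $g_0$ automatically satisfies $H^{g_0} = \langle c\sigma, d\sigma \rangle = H$, hence lies in $\textup{N}_G(H)$ and induces $\sigma$ on $H$; this yields $\sigma \in \operatorname{Aut}_G(H)$ and finishes the proof.

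The construction of $g_0$ would proceed in two stages, first fixing the action on $d$ and then correcting the action on $c$. By \cref{prop:automorphism_from_supergroupring_leaves_ccl_invariant} we have $d\sigma \in d^G$, so I may choose $a \in G$ with $d^a = d\sigma$. It then remains to adjust the action on $c$ without disturbing $d$: I would search for $t \in \textup{C}_G(d)$ such that $g_0 := ta$ conjugates $c$ to $c\sigma$. Unwinding the superscripts, the requirement $c^{ta} = c\sigma$ is equivalent to $c^t = (c\sigma)^{a^{-1}}$, whereas $d^{g_0} = (d^t)^a = d^a = d\sigma$ holds for free precisely because $t$ centralizes $d$.

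The hypothesis enters exactly at the existence of this $t$. Again by \cref{prop:automorphism_from_supergroupring_leaves_ccl_invariant}, the element $c\sigma$ lies in the conjugacy class $c^G$, and since a conjugacy class is closed under conjugation by elements of $G$, the element $(c\sigma)^{a^{-1}}$ lies in $c^G$ as well. The assumption $c^G = c^{\textup{C}_G(d)}$ then guarantees that this element is already realized by conjugation with a member of $\textup{C}_G(d)$, which furnishes the desired $t$ and hence $g_0 = ta$.

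I expect the only genuinely delicate point to be the order in which the two generators are handled: one must pin down $d$ first and then exploit that conjugating the membership $c\sigma \in c^G$ by $a^{-1}$ keeps us inside the same class, so that the hypothesis can re-realize it through a centralizer-of-$d$ element which slides past $a$ without affecting $d$. Note finally that the argument invokes nothing beyond \cref{prop:automorphism_from_supergroupring_leaves_ccl_invariant}, so no restriction on the commutative ring $R$ is needed.
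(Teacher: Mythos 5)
Your proof is correct and takes essentially the same approach as the paper: both arguments apply \cref{prop:automorphism_from_supergroupring_leaves_ccl_invariant} twice, first pinning down the action on $d$ by a conjugation and then using the hypothesis $c^G = c^{\textup{C}_G(d)}$ to correct the action on $c$ by an element centralizing $d$. The only differences are cosmetic (the paper composes $\sigma$ with $\operatorname{conj}(g)$ sending $d\sigma$ back to $d$, while you build $g_0 = ta$ directly), and your explicit check that $g_0 \in \textup{N}_G(H)$ is a point the paper leaves implicit.
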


\begin{proof} Let $\sigma \in \operatorname{Aut}_{RG}(H)$. As $d\sigma \in d^G$ by \cref{prop:automorphism_from_supergroupring_leaves_ccl_invariant}, there exists a $g \in G$ such that $d\sigma \operatorname{conj}(g) = d$. Now $c \sigma \operatorname{conj}(g) \in c^G = c^{\textup{C}_G(d)}$ and hence there exists a $g' \in \textup{C}_G(d)$ such that $\sigma = \operatorname{conj}(gg')^{-1} \in \operatorname{Aut}_G(H)$. \end{proof}

Now we can prove the subgroup normalizer property for certain metacyclic groups:

\begin{theorem} Let\label{prop:NPS_for_some_metacyclic} $G$ be a metacyclic group admitting a short exact sequence of the form $1 \to C_m \to G \to C_n \to 1$ with $m, n \in \mathbb{Z}_{\geq 1} \cup \{\infty\}$ and let $R$ be any commutative ring. Then \SNP G R holds provided one of the following is true: \begin{enumerate}
 \item\label{case:coprime} $m, n \in \mathbb{Z}_{\geq 1}$ are coprime,
 \item\label{case:prime_quotient} $n$ is a prime.
 \item $m$ is a prime.
\end{enumerate}
\end{theorem}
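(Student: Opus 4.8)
The plan is to deduce every case from \cref{prop:NP_two_generators_big_centralizer}. Write $M\cong C_m$ for the normal subgroup and fix a preimage $b\in G$ of a generator of $G/M\cong C_n$, so that $G=\langle M,b\rangle$. Given $H\le G$, note that $M\unlhd G$ forces $M\cap H$ to be a normal cyclic subgroup of $H$, while $H/(M\cap H)\cong HM/M\le G/M$ is cyclic; hence $H$ is metacyclic and two-generated. Put $\langle c\rangle=M\cap H$. If $H$ is cyclic we are done by \cref{prop:NP_for_cyclic_subgroups}, so assume $H=\langle c,d\rangle$ is non-cyclic, where $d$ is a preimage in $H$ of a generator of $H/(M\cap H)$. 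The organizing observation is that, since $c\in M$ and $M$ is abelian and normal, the conjugate $c^{g}$ depends only on the coset $gM\in G/M$; in particular $c^{G}=c^{\langle b\rangle}$, and for any $X\le G$ the set $c^{X}$ is determined by the image of $X$ in $G/M$. By \cref{prop:NP_two_generators_big_centralizer} it therefore suffices, in each case, to arrange that the image of $\textup{C}_G(d)$ in $G/M$ is as large as that of $\langle b\rangle$, i.e.\ that $c^{\textup{C}_G(d)}=c^{\langle b\rangle}=c^G$.

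Cases \eqref{case:prime_quotient} and the case that $m$ is prime are quickly disposed of. If $n$ is prime, then $HM/M$ is either trivial---forcing $H\le M$ cyclic---or all of $G/M$; in the latter case $\bar d$ generates $G/M$, so the remark above gives $c^{\langle d\rangle}=c^{G}$ and hence $c^{\textup{C}_G(d)}=c^{G}$. If $m$ is prime, then $M\cap H$ is either trivial---again forcing $H$ cyclic---or all of $M$; in the latter case $M\le H$, and writing $HM/M=\langle\bar b^{\,s}\rangle$ any preimage of $\bar b^{\,s}$ in $H$ differs from $b^{s}$ by an element of $M\le H$, so $b^{s}\in H$ and we may take $d=b^{s}$. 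As $\langle b\rangle$ is cyclic, hence abelian, we have $\langle b\rangle\le\textup{C}_G(b^{s})=\textup{C}_G(d)$, whence $c^{\textup{C}_G(d)}\supseteq c^{\langle b\rangle}=c^{G}$, as needed.

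The coprime case \eqref{case:coprime} is the crux. Here $m':=|c|$ divides $m$ and $n':=|H/(M\cap H)|$ divides $n$, so $\gcd(m',n')=1$ and $\langle c\rangle$ is a normal Hall subgroup of $H$. First, $\gcd(m,n)=1$ lets me take (by Schur--Zassenhaus) $G=M\rtimes\langle b\rangle$ with $o(b)=n$. By Schur--Zassenhaus applied inside $H$ I write $H=\langle c\rangle\rtimes\langle d\rangle$ with $|\langle d\rangle|=n'$; setting $s=n/n'$, both $\langle d\rangle$ and $\langle b^{s}\rangle$ are complements of the normal Hall subgroup $M$ in $M\langle b^{s}\rangle$. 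The key step is to invoke the conjugacy part of Schur--Zassenhaus in the \emph{sharpened} form that the two complements are conjugate by an element $z$ lying \emph{inside} $M$: then $\textup{C}_G(d)=\textup{C}_G(b^{s})^{z}\supseteq\langle b\rangle^{z}$, and because $z\in M$ centralizes $c\in M$, conjugation by $\langle b\rangle^{z}$ induces on $c$ exactly the same automorphisms as conjugation by $\langle b\rangle$. Equivalently, $\langle b\rangle^{z}$ and $\langle b\rangle$ have the same image in $G/M$, so $c^{\textup{C}_G(d)}\supseteq c^{\langle b\rangle^{z}}=c^{\langle b\rangle}=c^{G}$ and \cref{prop:NP_two_generators_big_centralizer} applies.

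I expect the only genuine obstacle to be this last point: the Schur--Zassenhaus conjugacy of complements together with the fact that the conjugating element can be chosen inside the normal subgroup $M$, which is what guarantees it fixes $c$ and thus does not disturb the conjugation action on $\langle c\rangle$. The two prime cases require no such input. Finally, the arguments for the prime cases use only the fixed preimage $b$ and the inclusion $c\in M$ (never the finiteness of the other parameter), so they remain valid when $n$, respectively $m$, is infinite, the finite parameter being the relevant prime; this covers the stated range $m,n\in\mathbb{Z}_{\ge1}\cup\{\infty\}$ and yields \NP{H}{G}{R} for every $H\le G$, i.e.\ \SNP{G}{R}.
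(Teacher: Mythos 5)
Your proof is correct, and its skeleton coincides with the paper's: both reduce every case to \cref{prop:NP_two_generators_big_centralizer} (together with \cref{prop:NP_for_cyclic_subgroups} for the degenerate subcases) via the observation that, for $c$ in the abelian normal subgroup $M$, the conjugate $c^g$ depends only on the coset $gM$, so $c^G=c^{\langle b\rangle}$ and one only has to make $\textup{C}_G(d)$ cover enough of $G/M$. Your handling of the two prime cases, including the remark that finiteness of the other parameter is never used, is the same as the paper's. The genuine difference is in the coprime case: the paper replaces $d$ by a suitable power so that $o(d)$ divides $n$ and then invokes Philip Hall's theorem for finite soluble groups \cite[9.1.7]{Rob} to embed $\langle d\rangle$ in a cyclic complement $B$ of $\langle a\rangle$; since $B$ is abelian and contains $d$, one gets $B\leq \textup{C}_G(d)$ and $c^G=c^B$ in one stroke. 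You instead normalize $d$ by Schur--Zassenhaus existence inside $H$ and then apply the conjugacy part of Schur--Zassenhaus to the two complements $\langle d\rangle$ and $\langle b^s\rangle$ of $M$ in $M\langle b^s\rangle$. This buys a more self-contained argument (only Schur--Zassenhaus, no Hall theory of soluble groups) at the cost of some conjugation bookkeeping. Note also that the point you single out as the only genuine obstacle --- that the conjugating element $z$ can be taken inside $M$ --- is both standard (if $K_1^g=K_2$ and one writes $g=zk_2$ with $z\in M$, $k_2\in K_2$, then $K_1^z=K_2^{k_2^{-1}}=K_2$) and in fact superfluous for your purposes: since $G/M$ is abelian, $\langle b\rangle^g$ has the same image in $G/M$ as $\langle b\rangle$ for \emph{any} $g\in G$, so $c^{\langle b\rangle^g}=c^{\langle b\rangle}=c^G$ already, and the plain conjugacy statement for complements would have sufficed.
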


\begin{proof} Let $\pi \colon G \to C_n$ be the epimorphism of $G$ onto $C_n$ in the short exact sequence in the theorem. Let $G = \langle a, b \rangle$, with $a$ a generator of the kernel of $\pi$, a normal cyclic subgroup of order $m$ of $G$. Let $H$ be a subgroup of $G$. Then there exist $c, d \in H$ with $H \cap \langle a \rangle = \langle c \rangle$ and $H = \langle c, d \rangle$.
\begin{enumerate}
 \item By assumption $(m,n) = 1$, so by the well-known Schur-Zassenhaus Theorem there exists a complement $B_1$ of $\langle a \rangle$ in $G$. Replacing $d$ by a power if necessary, we may assume that $|\langle d \rangle |$ divides $n$.
 Using this, and the fact that the complement $B_1$ is a Hall subgroup, a theorem of Philip Hall \cite[9.1.7]{Rob} implies that there exists a complement $B \simeq C_n$ of $\langle a \rangle$ in $G$ containing $\langle d \rangle$. Now \cref{prop:NP_two_generators_big_centralizer} completes the proof, as $c^G = c^{B} = c^{\textup{C}_G(d)}$.
 \item Assume that $n = p$ is a prime. If $H \leq \langle a \rangle$, then $H$ is cyclic and the statement follows from \cref{prop:NP_for_cyclic_subgroups}. Otherwise $\pi|_H$ is onto and we may assume that $d = a^k b$ for some $k \in \mathbb{Z}$. Then $c^{\textup{C}_G(d)} = c^{\langle b \rangle } = c^G$, and \cref{prop:NP_two_generators_big_centralizer} applies.
 \item Now let $m = p$ be a prime. If $H \cap \operatorname{Ker} \pi = 1$, then $H$ is cyclic and \cref{prop:NP_for_cyclic_subgroups} implies the desired statement. Otherwise we can assume $c = a$, and hence $H = \langle a, b^k \rangle$. Again, $c^{\textup{C}_G(b^k)} =c^{\langle b \rangle } = c^G$ and \cref{prop:NP_two_generators_big_centralizer} completes the proof. \qedhere
\end{enumerate}
\end{proof}

Case \eqref{case:prime_quotient} of \cref{prop:NPS_for_some_metacyclic} immediately implies:

\begin{corollary}\label{prop:NPS_for_dihedral} The subgroup normalizer property holds for dihedral groups and all commutative coefficient rings. \hfill $\Box$ \end{corollary}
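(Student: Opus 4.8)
The plan is to recognize every dihedral group as an instance of the metacyclic groups already handled in \cref{prop:NPS_for_some_metacyclic}, so that no new work is needed beyond checking the hypotheses of Case \eqref{case:prime_quotient}. Recall that a dihedral group is generated by a ``rotation'' $r$ and a ``reflection'' $s$ subject to $s^2 = 1$ and $s^{-1}rs = r^{-1}$. The rotation subgroup $\langle r \rangle$ is cyclic and normal, and the quotient $G/\langle r\rangle$ is generated by the image of $s$, hence cyclic of order $2$. Thus I would first produce, uniformly, the short exact sequence with cyclic kernel $\langle r\rangle$ and cyclic quotient of order $2$.

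Concretely, for the finite dihedral group of order $2k$ one obtains $1 \to C_k \to G \to C_2 \to 1$, and for the infinite dihedral group one obtains $1 \to C_\infty \to G \to C_2 \to 1$. In the notation of \cref{prop:NPS_for_some_metacyclic} these are precisely sequences with $m = k \in \mathbb{Z}_{\geq 1}$ (respectively $m = \infty$) and $n = 2$. Since the theorem explicitly allows $m \in \mathbb{Z}_{\geq 1} \cup \{\infty\}$, the shape of the sequence required there is met in every case; I would simply note this once, covering finite and infinite dihedral groups together.

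With the sequence in hand, the quotient order $n = 2$ is prime, so Case \eqref{case:prime_quotient} of \cref{prop:NPS_for_some_metacyclic} applies directly and yields that \SNP{G}{R} holds for every dihedral group $G$ and every commutative ring $R$. There is essentially no obstacle: the single substantive point is the bookkeeping observation that the admissible range of the kernel parameter $m$ in the theorem includes both finite values and $\infty$, which is exactly what lets the one sequence with quotient $C_2$ subsume the finite and the infinite dihedral groups simultaneously. Everything else is an immediate specialization of the already-proved theorem.
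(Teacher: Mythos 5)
Your proposal is correct and coincides with the paper's own argument: the corollary is stated there as an immediate consequence of Case \eqref{case:prime_quotient} of \cref{prop:NPS_for_some_metacyclic}, exactly via the short exact sequence $1 \to C_m \to G \to C_2 \to 1$ with $n = 2$ prime. Your explicit remark that $m = \infty$ is admissible, so the infinite dihedral group is covered by the same specialization, is a correct reading of the theorem's hypotheses and matches the paper's intent.
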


\begin{corollary}\label{prop:NPS_for_square_free_order} If all Sylow subgroups of $G$ are cyclic, then the subgroup normalizer property holds for $G$ and all commutative coefficient rings. This is in particular the case for groups of square-free order.
\end{corollary}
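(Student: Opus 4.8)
The plan is to recognise finite groups with all Sylow subgroups cyclic as a special case of the metacyclic situation already settled, and to reduce directly to case \eqref{case:coprime} of \cref{prop:NPS_for_some_metacyclic}. Since ``all Sylow subgroups cyclic'' is a condition on a finite group, I treat $G$ as finite throughout (groups of square-free order are automatically finite). The essential input is the classical Hölder--Burnside--Zassenhaus structure theorem for such groups, the so-called $Z$-groups.

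First I would recall that, by \cite[10.1.10]{Rob}, a finite group $G$ all of whose Sylow subgroups are cyclic admits a presentation
\[ G = \langle a, b \mid a^m = b^n = 1,\ b^{-1}ab = a^r\rangle \]
with $r^n \equiv 1 \pmod m$ and $\gcd\bigl(m, n(r-1)\bigr) = 1$, where $o(a) = m$ and $|G| = mn$. From this, $\langle a\rangle$ is a normal cyclic subgroup of order $m$ (normal because $a^b = a^r \in \langle a\rangle$) with cyclic quotient $G/\langle a\rangle \cong C_n$, so that $G$ fits into a short exact sequence $1 \to C_m \to G \to C_n \to 1$ of exactly the shape required by \cref{prop:NPS_for_some_metacyclic}.

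The one point that must be extracted from the presentation is the coprimality $\gcd(m,n) = 1$: since $n \mid n(r-1)$, the hypothesis $\gcd\bigl(m, n(r-1)\bigr) = 1$ forces $\gcd(m, n) = 1$. With $m, n \in \mathbb{Z}_{\geq 1}$ coprime, the hypothesis of case \eqref{case:coprime} of \cref{prop:NPS_for_some_metacyclic} is satisfied verbatim, and that theorem gives \SNP{G}{R} for every commutative ring $R$. For the final assertion I would note that if $|G|$ is square-free then every Sylow subgroup has prime order, hence is cyclic, so the preceding applies. The only real obstacle is therefore the appeal to the structure theorem; the work lies entirely in citing it in the precise form that guarantees the coprimality, after which case \eqref{case:coprime} of \cref{prop:NPS_for_some_metacyclic} finishes the argument with no further computation.
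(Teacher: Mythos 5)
Your proof is correct and follows essentially the same route as the paper: both invoke the Hölder--Burnside--Zassenhaus theorem \cite[10.1.10]{Rob} to exhibit $G$ as a metacyclic group with normal cyclic subgroup and cyclic quotient of coprime orders, and then apply case \eqref{case:coprime} of \cref{prop:NPS_for_some_metacyclic}. Your explicit extraction of the coprimality $\gcd(m,n)=1$ from the presentation is just a more detailed spelling-out of the same citation.
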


\begin{proof} By a theorem of Hölder, Burnside, and Zassenhaus a finite group whose Sylow subgroups are all cyclic has a normal cyclic subgroup with cyclic quotient of coprime order \cite[10.1.10]{Rob}. Hence we can apply \cref{prop:NPS_for_some_metacyclic}\eqref{case:coprime}. \end{proof}

Using the idea of the proof in \cite[Example 3.1]{HeIwJeJu} we can prove ths subgroup version of \cite[Theorem 2]{LiPaSe}:

\begin{remark} Assume that the group $G$ has an abelian subgroup of index $2$ and $R$ is a commutative ring. Then \SNP{G}{R} holds. \end{remark}

Also for certain normal subgroups we can prove that the normalizer property holds:

\begin{proposition}\label{prop:NP_reduction_on_normalsubgroup_and_faithful_action} Let $H$ be a finite normal subgroup of a group $G$. If $H$ has an abelian characteristic subgroup $N$ with a complement $W$ in $H$, which acts faithfully on $N$ and $\mathrm{H}^1(W, N) = 1$, then \NP{H}{G}{R} holds, provided \NP{N}{G}{R} holds. \end{proposition}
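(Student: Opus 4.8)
The plan is to work with the automorphism reformulation and to verify $\operatorname{Aut}_{RG}(H) = \operatorname{Aut}_G(H)$. Two normality remarks set the stage: since $H \trianglelefteq G$ we have $\textup{N}_G(H) = G$, so $\operatorname{Aut}_G(H)$ is precisely the image of $G$ in $\operatorname{Aut}(H)$; and since $N$ is characteristic in the normal subgroup $H$, also $N \trianglelefteq G$, so $\operatorname{Aut}_G(N)$ is the image of $G$ in $\operatorname{Aut}(N)$. Now fix $u \in \textup{N}_{\textup{U}(RG)}(H)$ and set $\sigma = \operatorname{conj}(u)|_H \in \operatorname{Aut}_{RG}(H)$. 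As $N$ is characteristic in $H$, the automorphism $\sigma$ maps $N$ onto $N$; equivalently $u$ normalizes $N$, so $\sigma|_N \in \operatorname{Aut}_{RG}(N)$.

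First I would invoke the hypothesis \NP{N}{G}{R}, which gives $\operatorname{Aut}_{RG}(N) = \operatorname{Aut}_G(N)$. Hence there is a $g \in G$ with $\operatorname{conj}(g)|_N = \sigma|_N$. Because $H \trianglelefteq G$ we have $\operatorname{conj}(g)|_H \in \operatorname{Aut}_G(H) \leq \operatorname{Aut}_{RG}(H)$, so $\tau := \sigma \, (\operatorname{conj}(g)|_H)^{-1}$ again lies in $\operatorname{Aut}_{RG}(H)$ and, by construction, restricts to the identity on $N$. Since $\sigma = \tau \, \operatorname{conj}(g)|_H$, it suffices to show $\tau \in \operatorname{Aut}_G(H)$; that is, we have reduced to the case of an automorphism of $H$ fixing $N$ pointwise.

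The heart of the argument is to squeeze out the last two hypotheses on $W$. First, faithfulness forces $\tau$ to act trivially on $H/N \cong W$: as $N$ is abelian, conjugation of $H$ on $N$ factors through $H/N$, and faithfulness of the $W$-action yields $\textup{C}_H(N) = N$. For $h \in H$ and $x \in N$ one has $x^h \in N$, so applying $\tau$ and using $\tau|_N = \mathrm{id}$ gives $x^{(h\tau)} = x^h$, whence $(h\tau)\,h^{-1} \in \textup{C}_H(N) = N$; thus $\tau$ induces the identity on $H/N$. Writing $H = N \rtimes W$, this means $w\tau = w\,\delta(w)$ with $\delta(w) = w^{-1}(w\tau) \in N$ for $w \in W$, and the fact that $\tau$ is an endomorphism translates into the cocycle identity $\delta(w_1w_2) = \delta(w_1)^{w_2}\delta(w_2)$, i.e.\ $\delta$ is a $1$-cocycle for the (conjugation) action of $W$ on $N$. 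The assumption $\mathrm{H}^1(W, N) = 1$ now forces $\delta$ to be a coboundary, $\delta(w) = (n^{-1})^{w} n$ for some fixed $n \in N$; but this says exactly that $\tau = \operatorname{conj}(n)|_H$ with $n \in N \leq G$. Therefore $\tau \in \operatorname{Aut}_G(H)$, and consequently $\sigma \in \operatorname{Aut}_G(H)$, which proves \NP{H}{G}{R}.

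I expect the main obstacle to be the middle transition rather than either endpoint: one must realise that once $\sigma$ has been corrected by a conjugation from $G$ so as to fix $N$ elementwise, faithfulness of the $W$-action is exactly what is needed to pin down the behaviour on the quotient $H/N$, thereby collapsing the problem to the single cohomological computation governed by $\mathrm{H}^1(W,N) = 1$. The remaining ingredients — the reduction to automorphisms, the two normality observations, and the identification of coboundaries with conjugations by elements of $N$ — are then routine.
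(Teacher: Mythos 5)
Your proposal is correct and follows essentially the same route as the paper's proof: use \NP{N}{G}{R} to adjust the given automorphism by a conjugation from $G$ so that it fixes $N$ pointwise, use faithfulness of the $W$-action on $N$ to conclude the adjusted automorphism is trivial on $H/N$, and then finish via the vanishing of $\mathrm{H}^1(W,N)$. The only difference is presentational: where the paper cites a result of Rotman to pass from $\mathrm{H}^1(W,N)=1$ to the conclusion that the automorphism is conjugation by an element of $N$, you unpack that citation into the explicit cocycle--coboundary computation.
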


\begin{proof} Let $\sigma \in \operatorname{Aut}_{RG}(H)$. By the assumptions we can modify $\sigma$ by an inner automorphism of $G$ and assume that $\sigma|_N = \operatorname{id}_N$. We now show that $\sigma$ also induces the identity on $H/N$. For all $w \in W$ we have $w\sigma = xv$ for some $x \in N$ and $v \in W$. We get for all $y \in N$ \[y\operatorname{conj}(w) = y^w = (y^w)\sigma = (y\sigma)^{(w\sigma)} = y^{xv} = y^v = y\operatorname{conj}(v). \] Hence $\operatorname{conj}(w)|_N = \operatorname{conj}(v)|_N$. As the action of $W$ on $N$ is faithful, we have $w = v$ and hence $w\sigma = xw$. As $W$ is a right transversal of $N$ in $H$, we get that the automorphism of $H/N$ induced by $\sigma$ is the identity. From the assumption on the cohomology together with \cite[Corollary 9.20]{Rot} we obtain $\sigma = \operatorname{conj}(z)$ for some $z \in N$. \end{proof}

In some of these cases we get a complete answer using that $\mathrm{H}^1(W, N)$ is trivial, e.g.\ if $W$ and $N$ are of coprime order (see \cite[Proposition 9.40]{Rot}). Let $H$ be a finite normal subgroup of $G$ and $R$ a commutative ring. Then \NP{H}{G}{R} holds if $H \simeq C_n \rtimes C_p$ for a prime $p$ such that $p \nmid n$ or if $H$ has an abelian normal Sylow $p$-subgroup $P$ with a complement acting faithfully on $P$ and $p$ is not invertible in $R$. For instance, \NP{H}{G}{R} holds for any commutative ring $R$, if $H \unlhd G$ and $H$ is a dihedral group of order $2m$, with $m$ odd.

\begin{remark} These results can be used to verify that \SNP{G}{\mathbb{Z}} holds for all groups $G$ of order at most $47$. \end{remark}

\bibliographystyle{amsalpha}
\bibliography{literature}

\bigskip

\begin{samepage} 
\noindent Andreas Bächle \\
Vakgroep Wiskunde,\\ Vrije Universiteit
Brussel,\\ Pleinlaan 2,\\ B-1050 Brussels\\
Belgium.\\
\href{mailto:abachle@vub.ac.be}{\texttt{ABachle@vub.ac.be}}
\end{samepage}
  
\end{document}